\newcounter{point}[subsection]
\newcounter{ppoint}[section]
\newcommand{\p}{\vspace{0.0cm}\par\noindent \addtocounter{point}{1}\thepoint. }
\newcommand{\pp}{\vspace{0.0cm}\par\noindent \addtocounter{ppoint}{1}\theppoint. }
\newtheoremstyle{NewTheoremStyle}
  {9pt}
  {9pt}
  {\itshape}
  { }
  {\bfseries}
  {.}
  { }
  { }
\theoremstyle{NewTheoremStyle}
\newtheorem{Theorem}{Theorem}
\newcounter{Example}
\newenvironment{Example}{\stepcounter{Example}\par \noindent \textbf{Example \theExample.} \noindent}{\hfill$\qed$\newline }
\renewcommand{\G}{\mathcal{G}}
\renewcommand{\H}{\mathcal{H}}
\renewcommand{\L}{\mathcal{L}}
\newcommand{\ad}{\mathrm{ad}\,}
\newcommand{\Ad}{\mathrm{Ad}\,}
\newcommand{\Ker}{\mathrm{Ker}\,}
\newcommand{\ind}{\mathrm{ind}\,}
\newcommand{\rank}{\mathrm{rank}\,}
\renewcommand{\G}{\mathfrak{g}}
\renewcommand{\H}{\mathfrak{h}}
\renewcommand{\L}{\mathfrak{l}}
\newcommand{\K}{\mathfrak{k}}
\newcommand{\N}{\mathfrak{n}}
\newcommand{\pder}[2]{\frac{\partial\, #1}{\partial\, #2}}
\newcommand{\Le}{\left}
\newcommand{\Ri}{\right}
\newcommand{\la}{\langle}
\newcommand{\ra}{\rangle}
\renewcommand{\mathbf}{\boldsymbol}
\newcommand{\corank}{\mathrm{corank}\,}
\title{\textbf{Algebraic method for construction\\of infinitesimal invariants of Lie groups\\representations}}
\author{Oleg L. Kurnyavko\thanks{Omsk Institute of Water Transport, Omsk, Russia, kurnyavko@mail.ru}, 
Igor V. Shirokov\thanks{Omsk State Technical University, Omsk, Russia, iv\_shirokov@mail.ru}}
\begin{document}
\maketitle

\begin{abstract}
\noindent \emph{We propose the method for obtaining invariants of arbitrary representations of Lie groups  that reduces this problem to known problems of linear algebra. The basis of this method is the idea of a special extension of the representation space, which allows us to regard it as a coalgebra of some Lie algebra. In its turn, this allows us to reduce the problem of constructing invariants of a given representation to the problem of constructing invariants of the coadjoint representation of the corresponding Lie group. In our previous paper it was shown that this problem can always be solved in a natural way by the methods of linear algebra.}
\end{abstract}

\newpage
\tableofcontents

\section{Introduction}
\pp Let $V$ be an arbitrary set and $G$ be some group, for which an action on the set $V$ is defined, i.e.
\begin{gather*}
    G\times V\to V\colon\ x\to y=g\, x,\quad x,y\in V,\ g\in G,
\end{gather*}
then the function $J(x)$ on the set $V$, such that $J(g\,x) = J(x)$, is called the \emph{ invariant of this group action} \cite{Vinberg:1989}. One would expect that the main task of the \emph{invariant theory} should be constructing a general method for obtaining a complete system of invariants for an arbitrary set $V$, an arbitrary group $G$, and an arbitrary action of a given group on the indicated set. However, it is obvious that in such a general formulation the problem can not be solved because of possible significant differences both in the structure of the set $V$ and the group $G$ and in the indicated set of the action of $G$ on $V$.

Even for the given set $V$, group $G$ and a certain action $G$ on $V$, the construction of a complete system of invariants is a very difficult task. In each concrete case, the main problem of invariant theory involves solving the following problems: the proof of the finite generation of the required invariants set and the development of a practical method for their obtaining. In many cases it is possible to solve the first problem successfully , and for the latter, researchers often have to confine themselves to proving general non-constructive existence theorems, and so on.

\pp\emph{ Classical invariant theory } (more precisely, the algebraic theory of invariants) is restricted to the problem of constructing polynomial invariants of linear representations, i.e. the set $V$ is a finite-dimensional linear space, on which the group $G$ acts by some of its linear representations. The main results of this theory are primarily connected with two fundamental works of Hilbert: \cite{Hilbert:1890}, \cite{Hilbert:1893}. In the first work, the finiteness of the number of the integer invariants generating rings was proved, but it possessed a significant disadvantage by the standards of that time --- it was not constructive. In this connection, a second paper appeared, in which estimates were obtained from above on the degree of basis invariants. A complete solution of the above problem is possible only in a number of special cases. So in the second half of the 19th century, an exhaustive answer was found for invariants of binary forms of degree less than 6 and only comparatively recently for invariants of binary forms of degree 8 \cite{Shioda:1967}. In addition to these cases, there is essentially only one systematically studied class of linear groups for which the problem is satisfactorily solved: the class of groups with a free algebra of invariants \cite{Shephard:1954}, \cite{Kac:1976}, \cite{Schwarz:1978}, \cite{Adamovich:1979}. The most general result was obtained in the papers \cite{Popov:1981}, \cite{Popov:1982}, which gives a principal way of explicitly describing these G-modules and their invariant algebras.

\pp \emph{Modern invariant theory } is based largely on the use of geometric methods, which allowed us to abandon the need to restrict the study to polynomial invariants. The price of such an extension was that, in most cases, the results of this theory are non-constructive in nature, i.e., are existence theorems, and so on. However, numerous practical applications, and primarily related to the current problems of quantum field theory, \emph{lead to the need to develop methods for explicitly constructing invariants}.

We note the important concept of the \emph{differential invariant}, which arose in the framework of group analysis of differential equations. Some important results related to differential invariants were obtained in the papers \cite{Olver:2001}, \cite{Olver:2007}, \cite{Olver:2010}, \cite{Shirokov:2007eng}, \cite{Shirokov:2015eng}.

One of the important cases admitting a constructive solution of the basic problem of invariant theory is related to the constructing invariants of representations of Lie groups. Let $G$ be a connected Lie group, $T(G)$ be some representation of the group $G$ in the linear space $V$, then the invariants of the given representation are defined by the following conditions:
\begin{gather*}
    J(T_g\,x)=J(x),\quad x\in V,\ g\in G,\ J(x)\in C^\infty(V),
\end{gather*}
where $T_g$ are the operators realizing the representation of the group $G$ on the linear space $V$. Their construction reduces to solving a system of differential equations:
\begin{gather}\label{eq:081}
    -t_{Ab}^a\,x^b\,\pder{J(x)}{x^a}=0,
\end{gather}
where $t_{Ab}^a = \pder{(T_g)^a_b}{g^A} \Big |_{g = e}, \ A=\overline {1, \dim G}, \ a, b = \overline {1, \dim V} $, and $ t_{Ab}^a$ are elements of the matrices forming the Lie algebra representation basis of the given Lie group. Despite the fact that there is no general method for integrating systems of differential equations, in most cases, there are techniques that, in principle, make it possible to find the desired solution. However, technical difficulties along this path can be very significant, which forces researchers either to look for cases for possible integration of the system (see, for example, \cite{Berzin:1996}), or to develop methods for solving it, unrelated to their direct integration (but their application, as a rule, is also limited to special types of Lie groups).

The case of constructing invariants of the coadjoint representation is the most studied and significant for applications in mathematics and physics. In particular, finding the Casimir functions is an important stage in the study of completely integrable Hamiltonian systems, which can be considered as classical systems on the orbits of the coadjoint representation \cite{Mish'enko:1976}, \cite{Bolsinov:1991}.

The problem of constructing invariants of the coadjoint representation for semisimple Lie algebras was largely solved in the papers \cite{Casimir:1931}, \cite{Racah:1950} and \cite{Chevalley:1951}. Invariants of classical Lie algebras were obtained in the papers \cite{Gelfand:1950}, \cite{Perelomov:1966}, \cite{Perelomov:1967}. For non-semisimple Lie algebras up to the present, in the general case the problem has not been solved. The main results refer to numerous special cases, see, for example, \cite{CampoamorStursberg:2002}. An important result was obtained for the case of solvable Lie algebras of arbitrary dimension, namely, in the paper \cite{Boyko:2006}, an algebraic algorithm for constructing invariants of the coadjoint representation was obtained on the basis of the moving frame method.

A general method for constructing invariants of coadjoint representation of Lie groups was proposed by us in the paper \cite{Shirokov:2016eng}. This method naturally reduces the problem of constructing invariants to the known problems of linear algebra. This method can be used equally well for Lie groups with arbitrary structure.

\pp The present work is devoted to the development of a general method for constructing infinitesimal invariants of an arbitrary  representation of Lie group, which also reduces this problem to the known problems of linear algebra, eliminating the need for the direct integration of the system (\ref{eq:081}).


In the second section we briefly describe the method of constructing invariants of the coadjoint representation of Lie groups, which was first introduced by us in the paper \cite{Shirokov:2016eng} and which is key to understanding the general method for constructing infinitesimal invariants of an arbitrary representation of a Lie group.

In the third section we propose a generalization of the method of constructing invariants of a coadjoint representation that allows one to construct infinitesimal invariants of an arbitrary representation of a Lie group, including, naturally, the invariants of the adjoint and coadjoint representations. Also here we will consider the question of the relationship between invariants of representations of Lie groups and their conjugate (dual) representations, which, in particular, in a number of cases makes it possible to construct a complete system of invariants of the coadjoint representation in a simple way, having a complete system of invariants of the adjoint representation, and conversely, to construct invariants of the adjoint representation by the invariants of the coadjoint representations.

In the fourth section we give example of the use of this method, i.e. the construction of invariants of a representation and invariants of the representation conjugate to a given one.

\section{Invariants of the coadjoint representation}
\subsection{The coadjoint representation of a Lie group}
Let $ G $ be a Lie group, $\G $ its Lie algebra, and $\G^* $ its coalgebra, i.e. for $\G$ and $\G^*$, pairing is defined as
\begin{gather*}
    \G\times \G^*\to \mathbb{R},\quad X, x\to \alpha=\la X, x \ra,\quad x\in\G,\ X\in\G^*,\ \alpha\in \mathbb{R}.
\end{gather*}
The mapping
\begin{gather}\label{eq:074}
    G\times\G^*\to\G^*\colon\quad (g,X)\to Y\equiv\mathrm{Ad}^*_g X,\quad g\in G,\ X,Y\in\G^*,\notag
    \intertext{where the action $\mathrm{Ad}^*_g$ is determined by the rule:}
    \la \mathrm{Ad}^*_g X, x\ra =\la X, \mathrm{Ad}_{g^{-1}} x\ra
\end{gather}
is called the \emph{coadjoint representation of the Lie group $G$}. The coadjoint representation operator has the form:
\begin{gather}\label{eq:078}
    (\mathrm{Ad}^*_g)_j^i X_i=(\mathrm{Ad}_{g^{-1}})_j^i X_i, \quad i,j=\overline{1,\dim \G}. 
\end{gather}
\emph{Invariants of a coadjoint representation} (\emph{also called Casimir functions}) are the functions on a Lie algebra that have the property:
\begin{gather*}
    J^*(\Ad^*_g\, X)=J^*(X).
\end{gather*}
It is not difficult to show that, by (\ref{eq:004}), the infinitesimal invariance criterion has the form:
\begin{gather}\label{eq:086}
    C_{ij}(X)\,\pder{J^*(X)}{X_j}=0, \quad i,j,k=\overline{1,\dim \G}.
\end{gather}
where $C_{ij}(X)=C_{ij}^k\,X_k$. The number of functionally independent invariants is determined by the rank of the matrix $C_{ij}(X)$ and is called the \emph{index of the Lie algebra} $\G$:
\begin{gather*}
    \ind \G=\dim \G^*-\sup\limits_{X\in\G^*}\rank C_{ij}(X).
\end{gather*}

The problem of constructing invariants of coadjoint representation using methods of linear algebra was described in detail in our paper \cite{Shirokov:2016eng}. However, given the importance of these results for understanding the general method of constructing invariants, which is proposed in this paper, we allow ourselves briefly to give them below. First of all, we recall the general algorithm for finding the polarization of a Lie algebra, which is one of the key components of this method, and then we will give the method for constructing invariants of the coadjoint representation.

\subsection{Construction of the Lie algebra polarization}
\p The construction of polarization is a well-known algebraic problem, however, there are some problems that are usually not considered by researchers. In this regard, we wil recall some important definitions and theorems related to this concept, and will also formulate a number of useful statements that will help the reader to solve this problem effectively .

Let $\G $ be the Lie algebra, $\G^*$ be the corresponding coalgebra, and $B_\lambda(X, Y)$ be the skew-symmetric form on $\G $ defined by
\begin{gather}\label{eq:103a}
    B_\lambda(X,Y)=\la \lambda, [X,Y] \ra,\quad X,Y\in \G,\quad \lambda\in\G^*,
\end{gather}
which has the following coordinate form:
\begin{gather*}
    B_\lambda(X,Y)=B_{ij}\,X^i\,Y^j,\quad B_{ij}=C_{ij}^k\,\lambda_k\equiv C_{ij}(\lambda),\quad i,j,k=\overline{1,\dim \G}.
\end{gather*}
The kernel of the form $ B_\lambda(X, Y) $, defined by
\begin{gather}\label{eq:104a}
    \Ker\, B_\lambda=\{X\in \G\ |\ B_\lambda(X,\G)=0\}\equiv \G^\lambda,
\end{gather}
coincides with the annihilator $\G^\lambda$ of the covector $\lambda\in\G^*$.

A subalgebra $\H $ such that
\begin{gather}\label{eq:022a}
    B_\lambda(\H, \H)=0,
\end{gather}
is called \emph{subordinate to the covector $\lambda\in~\G^*$}. A subalgebra $\H$ that is subordinate to the covector $\lambda\in\G^*$, which has the maximum possible dimension, i.e. $\dim\H =\frac{1}{2} \Le(\dim\G+\dim\G^\lambda\Ri)$, is called the \emph{polarization of the algebra $\G$ relative to the covector $\lambda$} or simply \emph{polarization of the covector (functional) $\lambda$}. Further we will use the notation $\H\equiv\mathcal{P}(\G)$. The problem of constructing polarization reduces to a considerable extent to the use of the corresponding theorems (see, for example, \cite{Diksmye:1974fr}) for the case of Lie algebras that are either semisimple or solvable, or unsolvable for which the polarization of the orthogonal complement to its radical is known.

\p In the case of a semisimple Lie algebra $\G$, its polarization with respect to the covector $\lambda\in\G^*$ is defined (see, for example, \cite{Diksmye:1974fr})
\begin{gather}\label{eq:009a}
    \mathcal{P}(\G)=\L^{(+)}\supsetplus \K,
\end{gather}
where $\K $ is some Cartan subalgebra, $K$ is a nondegenerate element in the subalgebra $\K $ corresponding to the covector $\lambda$, and $\L^{(+)}$ is the linear span of eigenvectors of the operator $\ad_K$ corresponding to positive eigenvalues. In the \cite{Shirokov:2016eng}, it was proved that the nondegenerate covector $\lambda\in\G^*$ corresponds to the nondegenerate vector $K\in\G $, which means that the expression (\ref{eq:009a}) determines the polarization of the semisimple Lie algebra $\G $ with respect to the nondegenerate covector $\lambda\in\G^*$.

An obvious but nonetheless important case that reduces to finding the polarization of a semisimple Lie algebra is the construction of  polarization of the reductive Lie algebra $\G$, i.e. $\G = Z \oplus S$, where $Z$ is the center in $\G $, and $ S $ is a semisimple subalgebra. It is easy to see that the polarization of the Lie algebra $\G$ is determined by
\begin{gather}\label{eq:013a}
    \mathcal{P}(\G)=\mathcal{P}(Z)\oplus\mathcal{P}(S),
\end{gather}
it is obvious that $\mathcal{P}(Z) = Z$.

\p The polarization of the solvable algebra $\G $ relative to the covector $\lambda\in\G^*$ is determined by the expression (see, for example, \cite{Diksmye:1974fr}):
\begin{gather}\label{eq:014a}
    \mathcal{P}(\G)=\N_{1}+\N_{2}+\ldots+ \N_{n},\quad \N_{i}=\ker B_i,\quad B_i=B_\lambda\big|_{\G_i},
\end{gather}
where $\G_i$ are the elements of a decreasing chain of ideals in $\G $
\begin{gather}\label{eq:010a}
    \G=\G_{n}\supset\G_{n-1}\supset\ldots\supset\G_{1}\supset \{0\},\quad \dim \G_i=i,\quad i=\overline{1,n}
\end{gather}
where $ n =\dim\G$. Note that if $B_\lambda = 0$ on $\G$, then $ \mathcal{P}(\G) = \G$. Thus, the problem of constructing the polarization reduces to constructing a chain (\ref{eq:010a}), which can be obtained by condensing the upper central series of the Lie algebra $\G$
\begin{gather}\label{eq:011}
    \G^{(0)}\supset\G^{(1)}\supset\ldots\supset\G^{(m-1)}\supset\G^{(m)},\ \G^{(m)}=[\G^{(m-1)}, \G^{(m-1)}],\ \G^{(0)}=\G,
\end{gather}
where the dimensions of the neighboring elements in the general case can differ by more than one. The compaction procedure was discussed in detail in \cite{Shirokov:2016eng}.

\p Let us consider the problem of constructing the polarization for an arbitrary Lie algebra. Let $\G$ be an arbitrary Lie algebra, $R$ its radical and $R^\perp$ be the orthogonal complement of $R$ in $\G$ with respect to the form $ B_\lambda (X,Y) $, i.e.
\begin{gather*}
    R^\perp=\{X\in \G\ |\ B_\lambda(X,R)=0\},
\end{gather*}
we note that for $ R^\perp$ the solvable polarization $\mathcal{P}(R^\perp)$ is known. Then there exists a polarization $ \mathcal{P}(R)$ of the radical $ R $ such that
\begin{gather}\label{eq:008a}
    [ \mathcal{R}(R^\perp), \mathcal{P}(R)]\subset \mathcal{P}(R),
\end{gather}
and the polarization $\mathcal {P}(\G)$ of the Lie algebra $\G$ is determined by the sum \cite{Diksmye:1974fr}:
\begin{gather}\label{eq:012a}
    \mathcal{P}(\G)=\mathcal{R}(R)+\mathcal{P}(R^\perp).
\end{gather}
This formula makes it possible to construct a polarization for an arbitrary Lie algebra. Thus, finally, the algorithm for constructing the polarization with respect to the given covector takes the following form:
\begin{itemize}
  \item we select a radical in the original algebra and its orthogonal complement;
  \item we construct the polarization of the orthogonal complement $R^\perp$; if $R^\perp$ is a semisimple, reductive or solvable algebra, then the question of constructing the polarization is solved by using the formulas (\ref{eq:009a}), (\ref{eq:013a}) or (\ref{eq:014a} ), respectively; If $ R^\perp$ is unsolvable but not semisimple, then using the Levi-Maltcev decomposition, we can select the radical and orthogonal complements to it in $R^\perp$ and apply this procedure again; as a result, we find the polarization $\mathcal{P}(R^\perp)$ and thus satisfy the first of the initial conditions for applying the formula (\ref{eq:012a}).
  \item we construct the polarization of the radical satisfying (\ref{eq:008a}), using (\ref{eq:014a}), since the radical by definition is a solvable Lie algebra; we recall that the problem of constructing a chain of ideals of codimension one is considered in detail in \cite{Shirokov:2016eng}.
\end{itemize}

In conclusion, we note that if the Lie algebra $\G $ is completely solvable, then for it there exists a polarization with respect to any points $\lambda \in\G^*$. Conversely, if $\G$ is an arbitrary solvable or semisimple Lie algebra, then there can exist such $\lambda\in\G^*$ for which polarization does not exist \cite[pp.~73,~83]{Diksmye:1974fr}.

\subsection{Construction of the Lie group coadjoint representation invariants}
\p The main idea of the method of constructing coadjoint invariants proposed in our paper \cite{Shirokov:2016eng} is that the construction of a complete set of invariants of the coadjoint representation of some group $ G $ is equivalent to constructing the canonical transition to the Darboux coordinates on the orbits of the coalgebra $\G^*$ of maximal dimension dual to the Lie algebra $ \G $ of the Lie group $G$.

Indeed, suppose that there is a Lie group $G$, its Lie algebra $\G$ and its corresponding coalgebra $\G^*$, and let each element $f\in\G^*$ corresponds to the coordinates $(f_1, \ldots, f_n)$, where $n =\dim\G^*$. The action of the group $G$ on the coalgebra $\G^* $ splits the latter into orbits $O_\lambda$, where $ \lambda\in\G^*$ is some element of the given orbit. Each of these orbits is a symplectic manifold, and therefore, by Darboux's theorem, special coordinates $(q, p)$ can be introduced on them such that the corresponding symplectic form has the form:
\begin{gather}\label{eq:007a}
    \omega_\lambda=dp_a\wedge dq^a,\quad a=~{1,\ldots,\frac{1}{2}\,\dim O_\lambda}.
\end{gather}
The transition to canonical coordinates is determined by functions of the form:
\begin{gather}\label{eq:005a}
    f_i=f_i(q,p,\lambda),\quad i=1,\ldots, \dim \G^*,
\end{gather}
such that $f_i(0,0, \lambda)=\lambda_i $. The coordinates $(q, p)$ can be considered as local coordinates of the surface, which is the orbit of $O_\lambda$, and the equations themselves (\ref{eq:005a}) as a parametric equation of this surface. Eliminating the variables $q$ and $p$ in these transition functions, we obtain the equation of the given orbit in the form
\begin{gather*}
    \psi_\mu(f)=\omega_\mu,\quad \mu=1,\ldots, n-\dim O_\lambda,
\end{gather*}
where $\omega_\mu=\psi_\mu(\lambda)$. Thus, we obtain $n-\dim O_\lambda $ functions for which $\psi_\mu(Ad^*_g f) = \psi_\mu(f) $ is true, i.e. these functions are invariants of the coadjoint representation.

If the covector $\lambda$ is non-degenerate, that is the orbit $O_\lambda$ has the maximum possible dimension, then the transformations (\ref{eq:005a}) define $\ind\G$ functions that are invariant under the action of the coadjoint representation, i.e. a complete set of solutions of the system (\ref{eq:086}).

Each nondegenerate orbit is determined by the set of numbers $\omega=(\omega_1, \ \omega_2, \ldots, \omega_{\ind\G})$, which in turn is determined by the choice of the covector $\lambda$ such that $\rank C_{ij}(\lambda)$ has the maximum possible value. In this case, $\omega_\alpha = \psi_\alpha(\lambda_1)=\psi_\alpha(\lambda_2)$, if $\lambda_{1,2} \in O_{\lambda}$.  Therefore it is advisable to introduce the parametrization of $\lambda(j)$ orbits $O_\lambda $, where $j =(j_1, j_2, \ldots, j_{\ind\G})$ is a set of numbers such that different sets of parameters $j$ correspond to covectors $\lambda$ belonging to different orbits $O_\lambda$. Consequently, this parameterization can be determined from the condition:
\begin{gather}\label{eq:027}
    \psi_\mu(\lambda(j))=\omega_\mu(j),\qquad \det\pder{\omega_\mu(j)}{j}\neq 0.
\end{gather}
It is obvious that the left-hand side of the last inequality can be rewritten in the form:
\begin{gather*}
    \det\pder{\omega_\mu(j)}{j_\alpha}=\det\pder{\psi_\mu}{f_i}\,\pder{f_i(j)}{j_\alpha}\,\Big|_{f=\lambda(j)}=
    \det X_\mu^{i}\,\pder{f_i(j)}{j_\alpha},
\end{gather*}
where $X_\mu^i$ are the components of the vectors that form the basis of the annihilator of the covector $\lambda(j)$. Parametrization of $\lambda(j)$ can be chosen in a linear way, i.e. $\lambda_i(j)=~a_i^\alpha\, j_\alpha$, then the inequality (\ref{eq:027}) takes the form:
\begin{gather}\label{eq:028}
    \det X_\mu^{i}(\lambda(j))\,a_i^\alpha \neq 0.
\end{gather}
Taking into account that the construction of the annihilator is an elementary problem of linear algebra, the inequality (\ref{eq:028}) allows us to choose a set of parameters $a_i^\alpha$ that provide the required parametrization.

Thus, we obtain a simple way of calculating the invariants from the equations (\ref{eq:005a}). Indeed, using the parametrization described above $\lambda=\lambda(j)$, we obtain the relations:
\begin{gather*}
    f_i=f_i(q,p,j),\quad i=1,\ldots, \dim \G^*,
\end{gather*}
and by eliminating the variables $q$ and $p$ in that relations we obtain equations of the form:
\begin{gather*}
    \varphi_\mu(f, j)=0\quad \mu=1,\ldots, \ind\G,
\end{gather*}
from which we can obtain the parameters $j_\alpha$ and obtain the relations:
\begin{gather*}
    j_\alpha=j_\alpha(f),\quad \alpha=1,\ldots, \ind\G.
\end{gather*}
These relations are the desired invariants of the coadjoint representation of the Lie group $ G $.

\p Reduction of the problem of constructing invariants to the problem of constructing transition functions to the Darboux coordinates is justified by the fact that the latter is solved elementary by linear algebra methods for the case of a canonical transition, linear in variables $p_a$ on an arbitrary non-degenerate orbit $ O_\lambda$ of the coalgebra $\G^*$ , i.e. for the transition functions of the form:
\begin{gather}\label{eq:006a}
    f_i(q,p)=\alpha^a_i(q)\,p_a+\chi_i(q),\quad \rank \alpha^a_i(q) =\frac{1}{2}\,\dim O_\lambda.
\end{gather}
Of course, there is no linear transition (\ref{eq:006a}) in the general case, but if we assume that $\alpha_i^a(q) $ and $ \chi_i(q)$ are holomorphic functions of complex variables $ q $, then we greatly extend the class of Lie algebras and corresponding orbits of the coadjoint representation for which this transition exists (here it is necessary to assume that the linear functionals in $ \G^* $ extend to the complex algebra $\G^c $ by linearity). We also note that, apparently, for every Lie algebra and for any of its nondegenerate orbits the transition (\ref{eq:006a}) exists.

In the \cite{Shirokov:2000eng} it was shown that if for the Lie group $G$ the left-invariant vector fields are known
\begin{gather*}
    \xi_i=\xi_i^a(q)\,\partial_{q^a}+\xi_i^A(h,q)\,\partial_{h^A},
\end{gather*}
which are written with respect to a special basis in the tangent space corresponding to the basis of the Lie algebra $\G=\{l_A, l_a\} $ of the group $G$, where the elements of the basis $l_A$ correspond to the polarization basis $\mathcal{P}(\G)$ of the Lie algebra $\G$, then the desired linear transition to the Darboux coordinates on a nondegenerate orbit $O_\lambda$ is defined :
\begin{gather}\label{eq:026a}
    f_i(q,p)=\xi^a_i(q)\,{p_a}+\xi^A_i(0, q)\,\lambda_A.
\end{gather}
In fact, the transition functions $ f_i $ that we are interested in are obtained from left-invariant fields by replacing $\partial_{h^A} \to \lambda_A, \ \partial_{q^a} \to p_a, \ h = 0 $. A direct substitution shows that the transition functions (\ref{eq:026a}) give the symplectic form on the orbit under consideration to the form (\ref{eq:007a}).

\p In conclusion, we give the final algorithm for constructing invariants of the coadjoint representation of the Lie group. Suppose that there is a Lie group $G$, its Lie algebra $\G =\{e_i\}$ and the corresponding coalgebra $\G^*= \{e^i\}$:
\begin{itemize}
  \item we construct the polarization $\H=\mathcal{P}(\G)=\{l_A\}$ for some non-degenerate covector $\lambda \in\G^*$;
  \item we construct a basis of the Lie algebra $\G$, complementing the basis of the algebra $\H$ in an appropriate way, i.e. $\G =\{l_A, l_a\}$;
  \item we construct the left-invariant fields $\xi_i(h, q) = \xi_i^a(q)\,\partial_{q^a}+\xi_i^A(h, q)\, \partial_{h^A}$ in the basis $\{l_A, l_a\}$;
  \item we construct the Darboux coordinates on $\G$ using the rule
  $$F_i(q,p)=\xi_i(h,q)\Big|_{{h=0 \atop \partial_{q^a}\to p_a} \atop \partial_{h^A}\to \Lambda_A}=\xi^a_i(q)\,{p_a}+\xi^A_i(0, q)\,\Lambda_A;$$
  \item performing the transitions $F_i\to f_i$ and $\Lambda_i\to\lambda_i$, where $f_i$ and $\lambda_i$ are the coordinates of the covectors corresponding to the original basis of the Lie algebra $\G=\{e_i\}$ and then excluding the variables $q_a$ and $p_a$ in the expressions for the transition functions we obtain the expressions of the form $j_\alpha=j_\alpha(f)$, which are the desired invariants of the coadjoint representation of the Lie group $G$.
\end{itemize}
We note once again that the problem of constructing the left-invariant fields $ \xi_i$ for arbitrary Lie algebras in the given algorithm is also solved by the methods of linear algebra \cite{Shirokov:1997eng} and, moreover, has been realized by the authors in computer mathematics systems such as \verb"Waterloo Maple " and \verb" Wolfram Mathematica".


\section{General method for construction invariants}
\subsection{Representations of Lie groups}
\p Let $G$ be a Lie group, and let $T(G)$ be its representation in the space $V $, i.e.
\begin{gather*}
    G\ni g \to T_g\in T(G),\quad    V \ni x \to y=T_g\,x \in V,
\end{gather*}
where $(T_g\,x)^a=(T_g)^a_b\,x^b$ и $a,b=\overline{1,\dim V}$ and besides
\begin{gather*}
    T_e=E,\quad T_{g_1\, g_2}=T_{g_1}\,T_{g_2}.
\end{gather*}
i.e. $(T_e)^a_b=\delta^a_b$, $(T_{g_1\, g_2})^a_b=(T_{g_1})^a_c\,(T_{g_2})^c_b$. We recall that these properties imply that
\begin{gather*}
    T_{g^{-1}}=(T_g)^{-1}.
\end{gather*}

\p Every representation of the Lie group $G$ on the space $V$ induces in the space $C^{\infty}(V)$ \emph{left regular representation}:
\begin{gather*}
    \G\ni g \to T^L_g\in T^L(\G),\\
    C^{\infty}(V)\ni f(x) \to  F(x)=f(T_g\,x)\equiv T^L_g\, f(x)\in C^{\infty}(V).
\end{gather*}
Functions on the representation space $V$ that satisfy the equation:
\begin{gather}
    J(T_g\,x)=J(x),\quad J(x)\in C^{\infty}(V),
\end{gather}
are called \emph{representation invariants }. Introducing the operators:
\begin{gather*}
    t_A=-t_{Ab}^a\,x^b\,\partial_a,\quad t_{Ab}^a=\pder{(T_g)^a_b}{g^A}\Big|_{g=e},\quad A=\overline{1,\dim G},
\end{gather*}
this condition can be written in the form:
\begin{gather}\label{eq:002}
    t_A\, J(x)=0.
\end{gather}
Thus, the construction of a complete set of functionally independent invariants of the given representation reduces to the solution of the system (\ref{eq:002}), and the number $N$ of functionally independent invariants is defined by the equality:
\begin{gather}\label{eq:079}
    N=\corank\, t^{a}_{A}(x),
\end{gather}
where $t^{a}_{A}(x)=-t_{Ab}^a\,x^b$.

\p Consider the space $V^*$ dual to $V$, i.e.
\begin{gather*}
    V^*\times V\to \mathbb{C},\quad \la X, x\ra=\alpha,\quad x\in V,\ X\in V^*,\ \alpha\in \mathbb{C}.
\end{gather*}
Then the \emph{conjugate representation} $T^*(G)$, which we will also call \emph{copresentation}, is defined by the equality:
\begin{gather}\label{eq:003}
    \la T^*(g)\, X, T(g)\,x\ra=\la X, x\ra.
\end{gather}
Copresentation invariants are defined:
\begin{gather}\label{eq:004}
    t^*_A\, J^*(X)=0,\quad t^*_A=t^{a}_{Ab}\,X_a\,\partial^b.
\end{gather}
The number $N$ of functionally independent invariants of presentation $T^*(G)$ is defined by the equality:
\begin{gather*}
    N=\corank\, t_{Aa}(X),
\end{gather*}
where $t_{Aa}(X)=t_{Aa}^b\,X_b$.

\subsection{Relationship between invariants of representations and conjugate representations}
A possibility of elementary algebraic construction of Lie groups representations invariants is to find invariants of the conjugate representation with respect to the invariants of the original representation.

Let $G$ be a Lie group, $T(G)$ and $T^*(G)$ be some of its representations and the representation conjugate to it, acting in the spaces $V$ and $V^*$, respectively , and $J(x)$ and $J^*(X) $ are invariants of these representations which satisfy the equations (\ref{eq:002}) and (\ref{eq:004}). An arbitrary function $f(x)$ on the space of the representation $V$, satisfying the condition
\begin{gather*}
    \det\,\frac{\partial^2\, f(x)}{\partial\,x^a\,\partial\,x^b}\neq 0,\quad a,b=\overline{1,\dim V},
\end{gather*}
where $x\in V $ is the point of general position, we will call a \emph{non-degenerate function on the space $V$}. By analogy, we can define \emph{non-degenerate functions on the space $V^*$}.
\begin{Theorem}
Let $ J (x) $ be a non-degenerate invariant of the representation $T(G)$, then conjugate representation invariant can be found by formula:
\begin{gather}\label{eq:083}
    J^*(X)=\Le[x^a\,X_a- J(x)\Ri]\Big|_{X_a=\pder{J(x)}{x^a}}.
\end{gather}
\end{Theorem}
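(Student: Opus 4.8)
The formula \eqref{eq:083} is exactly the Legendre transform of $J$, so the plan is to show that the Legendre transform intertwines the infinitesimal action $t_A$ on $V$ with the infinitesimal action $t^*_A$ on $V^*$, hence carries invariants to invariants. First I would record the basic relations produced by the substitution $X_a = \partial J/\partial x^a$: writing $\widetilde J^*(X)$ for the right-hand side before this substitution, i.e. $\widetilde J^* = x^a X_a - J(x)$ regarded as a function of $x$ and $X$, the stationarity in $x$ forces $X_a = \partial J/\partial x^a$, and then the standard Legendre identities give $\partial J^*/\partial X_a = x^a$ and, conversely, $\partial J/\partial x^a = X_a$ on the graph of the transform. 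Nondegeneracy of $J$, i.e. $\det(\partial^2 J/\partial x^a\partial x^b)\neq 0$, is precisely what makes the relation $X_a = \partial J/\partial x^a$ locally invertible for $x=x(X)$, so that $J^*(X)$ is a well-defined smooth function near a generic point; this is where the nondegeneracy hypothesis is used.

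Next I would compute $t^*_A J^*(X)$ using the chain rule. By \eqref{eq:004}, $t^*_A J^* = t^a_{Ab}\,X_a\,\partial^b J^* = t^a_{Ab}\,X_a\, \partial J^*/\partial X_b = t^a_{Ab}\,X_a\, x^b$, using $\partial J^*/\partial X_b = x^b$. On the other hand, $t_A J(x) = -t^a_{Ab}\,x^b\,\partial_a J = -t^a_{Ab}\,x^b\,X_a$, using $\partial_a J = X_a$ on the graph. Comparing the two expressions gives $t^*_A J^*(X) = -\,t_A J(x)\big|_{x=x(X)}$. Since $J$ is an invariant of $T(G)$, the right-hand side vanishes by \eqref{eq:002}, hence $t^*_A J^*(X)=0$ for all $A$, which by \eqref{eq:004} says $J^*$ is an invariant of the conjugate representation $T^*(G)$.

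The one genuine point requiring care — the main obstacle — is justifying the Legendre identities $\partial J^*/\partial X_a = x^a$ and $\partial J/\partial x^a = X_a$ rigorously on the locus defined by $X_a=\partial J/\partial x^a$, since $J^*$ is obtained by eliminating $x$ and one must differentiate through this implicit elimination. The clean way is: differentiate $\widetilde J^*(x,X) = x^a X_a - J(x)$ with respect to $X_a$ at fixed $X$ while letting $x=x(X)$ vary along the graph; the terms involving $\partial x^b/\partial X_a$ collect with coefficient $X_b - \partial J/\partial x^b$, which is zero on the graph, leaving $\partial J^*/\partial X_a = x^a$. The converse identity is immediate from the defining relation. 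One should also note, as the paper does elsewhere, that these computations are local, valid near a point of general position, which is all that is needed since functional independence and the invariance equations \eqref{eq:002}, \eqref{eq:004} are local conditions. It may also be worth remarking that applying the construction twice returns $J$ (the Legendre transform is an involution on nondegenerate functions), which gives the symmetric statement that invariants of $T(G)$ can equally be recovered from nondegenerate invariants of $T^*(G)$.
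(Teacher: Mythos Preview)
Your proof is correct and follows essentially the same approach as the paper: both verify $t^*_A J^*(X)=0$ by directly differentiating $x^c(X)\,X_c-J(x(X))$ and using the cancellation of the $\partial x^c/\partial X_a$ terms on the graph $X_a=\partial J/\partial x^a$. The only organizational difference is that you first isolate the Legendre identity $\partial J^*/\partial X_a = x^a$ and then invoke it, whereas the paper performs the same cancellation inline in a single chain-rule computation; the underlying argument is identical.
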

\begin{proof}
Due to the fact that $J(x) $ is the representation invariant $T(G)$, we have the equality
\begin{gather*}
    -t_{ia}^b\,x^a\,\pder{J(x)}{x^b}=0.
\end{gather*}
And due to the fact that $J^*(X)$ is the representation invariant $T^*(G)$ conjugate to $T(G)$, it must satisfy the condition (\ref{eq:004}):
\begin{gather*}
    t_{ia}^b\,X_b\,\pder{J^*(X)}{X_a}=0.
\end{gather*}
Then successively we get:
\begin{multline*}
    t_{ia}^b\,X_b\,\pder{J^*(X)}{X_a}=t_{ia}^b\,X_b\,\pder{}{X_a}\,\Le[x^c(X)\,X_c- J(x(X))\Ri]=\\=
    t_{ia}^b\,X_b\,\pder{x^c}{X_a}\,X_c+t_{ia}^b\,X_b\,x^c\pder{X_c}{X_a}-t_{ia}^b\,X_b\,\pder{J(x)}{x_c}\,\pder{x^c}{X^a}=\\=
    t_{ia}^b\,\pder{J(x)}{x^b}\,\pder{x^c}{X_a}\,\pder{J(x)}{x^c}+t_{ia}^b\,\pder{J(x)}{x^b}\,x^c\,\delta^a_c-t_{ia}^b\,\pder{J(x)}{x^b}\,\pder{J(x)}{x_c}\,\pder{x^c}{X^a}=\\=
    t_{ia}^b\,x^a\,\pder{J(x)}{x^b}=0.
\end{multline*}
\end{proof}
Note that formula (\ref{eq:083}) is known as the Legendre transformation. Also note that the inverse transform is defined by a similar formula:
\begin{gather*}
    J(x)=\Le[x^a\,X_a- J^*(X)\Ri]\Big|_{x^a=\pder{J^*(X)}{X_a}}.
\end{gather*}

\begin{Example}
Consider the Lie group $G$, the Lie algebra $\G=\{e_i\}$ which has commutation relations:
\begin{gather*}
    [e_2,e_3]=e_1,\quad
    [e_2,e_4]=e_3,\quad
    [e_3,e_4]=-e_2,
\end{gather*}
and some of its presentation:
\begin{gather*}
    g\to
    \left(
\begin{array}{cccc}
 1 & - s^3\,\cos  s^4-s^2\,\sin  s^4 & s^2\,\cos  s^4- s^3\sin  s^4 & \frac{(s^2)^2}{2}+\frac{(s^3)^2}{2} \\
 0 &  \cos  s^4 & \sin  s^4 & -s^3 \\
 0 & -\sin  s^4 & \cos  s^4 &  s^2 \\
 0 & 0 & 0 & 1 \\
\end{array}
\right)
\end{gather*}
This representation has two invariants, which are determined by the system:
\begin{gather*}
    x^4\,\pder{J(x)}{x^3}+x^3\,\pder{J(x)}{x^1}=0,\quad
    x^4\,\pder{J(x)}{x^2}+x^2\,\pder{J(x)}{x^1}=0,\quad
    x^3\,\pder{J(x)}{x^2}-x^2\,\pder{J(x)}{x^3}=0.
\end{gather*}
Its solutions have the form:
\begin{gather*}
    J_1(x)=(x^2)^2+(x^3)^2-2 x^1 x^4,\quad J_2(x)=x^4.
\end{gather*}
We construct a sheaf of invariants of the form:
\begin{gather*}
    J_\alpha(x)=\alpha_1\,J_1(x)+\alpha_2\,J_2(x),\quad \alpha=\{\alpha_1, \alpha_2\}.
\end{gather*}
It is easy to see by direct differentiation that the given function is invariant in the sense of the definition given above. Then there exists a one-to-one relationship between the coordinates $x^a$ on the space of the representation $V$ and the coordinates $X_a$ on the space of the conjugate representation:
\begin{gather*}
    X_1=\pder{J(x)}{x^1}=-2 \alpha_1  x^4,\quad
    X_2=\pder{J(x)}{x^2}= 2 \alpha_1  x^2,\\
    X_3=\pder{J(x)}{x^3}= 2 \alpha_1  x^3,\quad
    X_4=\pder{J(x)}{x^4}= \alpha_2 -2 \alpha_1  x^1.
\end{gather*}
These expressions can be easily inverted:
\begin{gather}\label{eq:022}
    x^1= \frac{\alpha_2 -X_4}{2 \alpha_1 },\quad
    x^2= \frac{X_2}{2 \alpha_1 },\quad
    x^3= \frac{X_3}{2 \alpha_1 },\quad
    x^4= -\frac{X_1}{2 \alpha_1 }.
\end{gather}
The sheaf of  representation invariants of the conjugate to a given is determined by the expression:
\begin{gather*}
    J^*_\alpha(X)=x^1 X_1+x^2 X_2+x^3 X_3+x^4 X_4-\alpha_1  \left((x^2)^2+(x^3)^2-2 x^1 x^4\right)-\alpha_2  x^4,
\end{gather*}
which taking into account (\ref{eq:022}) takes the form:
\begin{gather*}
    J^*_\alpha(X)=\frac{2 \alpha_2  X_1}{\alpha_1 }+\frac{X_2^2}{\alpha_1 }+\frac{X_3^2}{\alpha_1 }-\frac{2 X_1 X_4}{\alpha_1 },
\end{gather*}
and hence the required invariants of the conjugate representation have the form:
\begin{gather*}
    J^*_1(X)=X_2^2+X_3^2-2 X_1 X_4,\quad J^*_2(X)=X_1.
\end{gather*}
\end{Example}

\subsection{Construction of the Lie group representation invariants}
Traditionally, the construction of a Lie group representation invariants reduces to solving a system of differential equations (\ref{eq:002}). Nevertheless, it is possible to construct invariants without resorting to its direct integration.

The proposed method is a generalization of our earlier developed approach to the construction of invariants of the coadjoint representation of Lie groups by linear algebra methods, which is based on the fact that on a coalgebra that is a representation space for a coadjoint representation of a Lie group, there exists a Poisson structure. In the general case, there is no such structure on the representation space, but it is always possible to construct a special extension of a given space that admits a Poisson structure. This must be done in such a way that this extension can be regarded as a coalgebra of some suitable Lie algebra, and so for the construction of invariants one can use the method developed earlier for constructing invariants of the coadjoint representation.

Let $ G $ be a Lie group, and let $T(G)$ be its representation in the space $V$. Consider the space:
\begin{gather}\label{eq:084}
    \overline{\G}=\G \subsetplus V^*=\{e_A, E^a\},\quad \G=\{e_A\},\quad V^*=\{E^a\},\\
    \label{eq:085}
    [e_A, e_B]=C_{AB}^C\,e_C,\quad [e_A, E^a]=-t^{a}_{Ab}\,E^b,\quad [E^a, E^b]=0,
\end{gather}
where $A,B,C=\overline{1,\dim G},\ a,b=\overline{\dim M+1, \dim M+\dim G}$. It is not difficult to see that the structure constants of the space $\overline{\G} $ satisfy the Jacobi identity by construction, and hence the given space is a Lie algebra, which we shall henceforth refer to as the \emph{extended Lie algebra}.

Consider the Lie algebra $\overline{\G}^*$, which is a coalgebra of the Lie algebra $\overline{\G} $. On the coalgebra $\overline{\G}^*$ there is a natural Poisson structure of the following form:
\begin{gather*}
    \{X_A, X_B\}=C_{AB}^C\,X_C,\quad \{X_A, x^a\}=-t_{Ab}^a\,x^b=-\{x^a, X_A\},\quad \{x^a, x^b\}=0,
\end{gather*}
which corresponds to the functions of Casimir, determined by:
\begin{gather*}
    \{X_A,J(x,X)\}=C_{AB}^C\,X_C\,\pder{J(x,X)}{X_B}-t_{Ab}^a\,x^b\,\pder{J(x,X)}{x^a}=0,\\
    \{x^a,J(x,X)\}=-t_{Ab}^a\,x^b\,\pder{J(x,X)}{X_A}=0.
\end{gather*}
If $J(x, X)$ depends only on $x^a$, then this function is an invariant of the representation $T(G)$ on the space $V$. Thus we have the following theorem.
\begin{Theorem}
The problem of constructing invariants of an arbitrary representation of the Lie group $ G $ on the space $V$ is equivalent to the construction of invariants of the coadjoint representation of the extended Lie group $ \overline{G}$ whose Lie algebra $\overline{\G} $ is the semidirect sum of the Lie algebra $\G$ of the Lie group $G$ and the space $V^*$ dual to $V$, which can be regarded as a commutative ideal, i.e.
\begin{gather*}
    \overline{\G}=\G \subsetplus V^*.
\end{gather*}
\end{Theorem}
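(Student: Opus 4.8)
The plan is to show that the Poisson-algebra computation already displayed before the theorem statement actually establishes the claimed equivalence, essentially by unwinding definitions and invoking the general algorithm of Section~2. First I would observe that the extended space $\overline{\G}=\G\subsetplus V^*$ is genuinely a Lie algebra: the bracket relations (\ref{eq:085}) are bilinear and skew, and the Jacobi identity is exactly the statement that the $t_{Ab}^a$ furnish a representation of $\G$ on $V^*$ (equivalently, the transpose of the representation of $\G$ on $V$ coming from $T(G)$), together with the fact that $V^*$ is abelian; so the only nontrivial instance of Jacobi, namely $[e_A,[e_B,E^a]]-[e_B,[e_A,E^a]]=[[e_A,e_B],E^a]$, reduces to $t_{Ac}^a t_{Bb}^c - t_{Bc}^a t_{Ab}^c = C_{AB}^C t_{Cb}^a$, which holds because $t_A=-t_{Ab}^a x^b\partial_a$ satisfy $[t_A,t_B]=C_{AB}^C t_C$ (this is just the differential of $T_{g_1g_2}=T_{g_1}T_{g_2}$). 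This is the ``by construction'' remark in the text made explicit.

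Next I would pass to the dual $\overline{\G}^*$ with its linear (Lie--Poisson) structure. Coordinates on $\overline{\G}^*$ split as $(X_A)$ dual to $\{e_A\}$ and $(x^a)$ dual to $\{E^a\}$, and the Poisson bracket of linear functions is read off from the structure constants: $\{X_A,X_B\}=C_{AB}^C X_C$, $\{X_A,x^a\}=-t_{Ab}^a x^b$, $\{x^a,x^b\}=0$, exactly the display preceding the theorem. The Casimir functions $J(x,X)$ of this Poisson structure are by definition the functions annihilated by all Hamiltonian vector fields, i.e. by $\{X_A,\,\cdot\,\}$ and $\{x^a,\,\cdot\,\}$ for every index; and these are precisely the invariants of the coadjoint representation of the corresponding simply connected Lie group $\overline{G}$, by the infinitesimal invariance criterion (\ref{eq:086}) applied to $\overline{\G}$. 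So far this is a restatement.

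The heart of the argument is the bijection between representation invariants of $T(G)$ on $V$ and those coadjoint invariants of $\overline{G}$ that happen to depend only on the $x$-variables. If $J$ depends on $x$ alone, the equations $\{X_A,J\}=0$ collapse to $-t_{Ab}^a x^b\,\partial J/\partial x^a=0$, i.e. $t_A J=0$, which is exactly (\ref{eq:002}); and $\{x^a,J\}=0$ is automatic since $V^*$ is abelian and $J$ has no $X$-dependence. Conversely every representation invariant $J(x)$, viewed as a function on $\overline{\G}^*$ constant in $X$, satisfies both families of equations, hence is a Casimir of $\overline{\G}^*$. Thus the representation invariants sit inside the coadjoint invariants of $\overline{G}$ as precisely the $X$-independent ones. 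The main obstacle — really the only substantive point to argue carefully rather than the formal bookkeeping above — is to check that restricting attention to $X$-independent Casimirs loses nothing: one must verify that a \emph{complete} system of invariants of $T(G)$ is obtained this way, i.e. that the coadjoint orbits of $\overline{G}$ cut out enough functions of $x$ alone. Here I would note that the projection $\overline{\G}^*\to V^*$, $(X,x)\mapsto x$, intertwines the coadjoint action of $\overline{G}$ on $\overline{\G}^*$ with the given action of $G$ on $V$ (since $V^*$ is an ideal, so $(V^*)^{\perp}=\G^*$-component is stable and the action on the quotient $\overline{\G}^*/\G^* \cong V^*$ is the contragredient of $T^*$, i.e. $T$ on $V$); therefore pulling back invariants along this equivariant projection gives all $X$-independent Casimirs from representation invariants and no others. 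Combining the two directions, the complete-invariant problems for the two actions are equivalent, and one may apply the linear-algebra algorithm of Section~2.3 to $\overline{G}$ to produce them, which is the content of the theorem.
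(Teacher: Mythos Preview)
Your proposal is correct and follows essentially the same approach as the paper: the paper's argument consists precisely of the Poisson-bracket computation displayed immediately before the theorem together with the one-line observation that a Casimir $J(x,X)$ depending only on the $x^a$ satisfies (\ref{eq:002}), so ``Thus we have the following theorem.'' Your write-up is more thorough (you make the Jacobi check explicit, argue both directions of the bijection, and add an equivariance/completeness remark via the projection $\overline{\G}^*\to V$ --- note that the target should be $V\cong(V^*)^*$, not $V^*$, a small slip in your parenthetical), but the route is the same.
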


The reduction of the problem of constructing invariants of arbitrary representations to the calculation of invariants of a coadjoint representation is justified by the fact that the method of constructing invariants of the coadjoint representation, first published by the authors in the paper \cite{Shirokov:2016eng} and briefly stated in the previous section, makes it possible to find the last of these easily using linear algebra methods.

Thus, the practical recipe for constructing the required invariant is as follows: using the method outlined in \cite{Shirokov:2016eng}, we build a basis for Casimir functions, after that we construct a necessary number of functions that depend only on variables $x^a$ as a combination of these basis elements.

We note that, following the approach suggested above, the problem of constructing invariants of the conjugate representation does not need to be considered separately since conjugate representations are simply a particular case of representations.


\section{Example}
\noindent\textbf{Step 0: the problem statement}. We consider the Lie group $G$, $\dim G = 4$ and its Lie algebra $\G =\{e_A\}$, where $A=\overline{1,\dim G}$, with commutation relations:
\begin{gather}\label{eq:019}
    [e_1,e_4]=e_1+e_2,\quad
    [e_2,e_4]=e_2,\quad
    [e_3,e_4]=e_3,
\end{gather}
and some of its representation $T(G)$ in the space $V = \{E_a\} $, where $a=\overline {1, \dim V} $, of dimension $\dim V = 4 $ for which the corresponding representation $T(\G) $ of the Lie algebra $\G $ is defined:
\begin{gather}\label{eq:080}
    l_1\equiv T(e_1)= \left(
\begin{array}{cccc}
  0 &  0 & 0 & 0 \\
  0 &  0 & 0 & 0 \\
  0 &  0 & 0 & 0 \\
 -1 & -1 & 0 & 0 \\
\end{array}
\right),\quad l_2\equiv T(e_2)= \left(
\begin{array}{cccc}
 0 &  0 & 0 & 0 \\
 0 &  0 & 0 & 0 \\
 0 &  0 & 0 & 0 \\
 0 & -1 & 0 & 0 \\
\end{array}
\right),\\ l_3\equiv T(e_3)= \left(
\begin{array}{cccc}
 0 & 0 &  0 & 0 \\
 0 & 0 &  0 & 0 \\
 0 & 0 &  0 & 0 \\
 0 & 0 & -1 & 0 \\
\end{array}
\right),\quad l_4\equiv T(e_4)= \left(
\begin{array}{cccc}
 1 & 1 & 0 & 0 \\
 0 & 1 & 0 & 0 \\
 0 & 0 & 1 & 0 \\
 0 & 0 & 0 & 0 \\
\end{array}
\right)
\end{gather}
Invariants of this representation are defined by the equations:
\begin{gather*}
    (x^1+x^2)\,\pder{J}{x^4}=0,\quad x^2\,\pder{J}{x^4}=0,\quad x^3\,\pder{J}{x^4}=0,\\
     (x^1+x^2)\,\pder{J}{x^1}+x^2\,\pder{J}{x^2}+x^3\,\pder{J}{x^3}=0,
\end{gather*}
where $J(x)\in C^{\infty}(V)$. By (\ref{eq:079}), the number of functionally independent invariants $N = 2$.

\noindent\textbf{Step 1: constructing of the extended algebra}. Let us construct the extended algebra $\overline{\G}$, using formula (\ref{eq:084}) and (\ref{eq:085}), and replace for convenience the notation of the basis vectors of the representation space $E^a\to e_a$:
\begin{gather*}
    [e_1,e_4]=e_1+e_2,\quad
    [e_1,e_8]=e_5+e_6,\quad
    [e_2,e_4]=e_2,\quad
    [e_2,e_8]=e_6,\quad
    [e_3,e_4]=e_3,\\
    [e_3,e_8]=e_7,\quad
    [e_4,e_5]=-e_5-e_6,\quad
    [e_4,e_6]=-e_6,\quad
    [e_4,e_7]=-e_7.
\end{gather*}

\noindent\textbf{Step 2: constructing of the polarization}. We choose a non-degenerate covector of form $\lambda=(0, j_1, j_2, 0, j_3, j_4, 1, 0)$. Let us construct the polarization $ \mathcal {P} (\overline{\G}) $ of the extended Lie algebra $\overline{\G} $ with respect to a given covector. The annihilator of the chosen covector $\overline{\G}^{\lambda}$ is:
\begin{gather*}
    l_1=e_5-\frac{e_6 \left(j_3+j_4\right)}{j_4},\quad
    l_2= -\frac{e_2 \left(j_3+j_4\right)}{j_4}+\frac{e_6 j_1 j_3}{j_4^2}+e_1,\\
    l_3= e_7-\frac{e_6}{j_4},\quad
    l_4= -\frac{e_2}{j_4}-\frac{e_6 \left(j_2 j_4-j_1\right)}{j_4^2}+e_3.
\end{gather*}
Then the dimension of the polarization is $\dim \mathcal{P}(\overline{\G})=\frac{1}{2}(\dim \overline{\G}+\dim \overline{\G}^\lambda)=6$.

The Lie algebra $\overline{\G}$ is solvable, therefore, in order to construct the polarization, it is necessary to construct a chain of ideals of codimension 1:
\begin{gather*}
    \overline{\G}=\overline{\G}^{(8)}\supset \overline{\G}^{(7)}\supset\ldots \overline{\G}^{(2)}\supset \overline{\G}^{(1)}\supset \{0\}
\end{gather*}
The desired chain can be constructed by compaction of the upper central series:
\begin{gather}\label{eq:010}
    \left\{e_1,e_2,e_3,e_4,e_5,e_6,e_7,e_8\right\}\supset\left\{e_1,e_2,e_3,e_5,e_6,e_7\right\}\supset\left\{0\right\}.
\end{gather}
First of all, it is necessary to find a common eigenvector, which necessarily exists for every solvable Lie algebra. Having a common eigenvector, i.e. one-dimensional ideal, we can construct the corresponding quotient algebra. In it, it is also necessary to find a common eigenvector whose complete preimage will be an ideal greater than the previous by one, i.e. dimension two. Continuing by analogy, we obtain the desired chain of ideals:
\begin{gather*}
    \overline{\G}^{(1)}=\{e_7\},\quad \overline{\G}^{(2)}=\{e_6, e_7\},\quad \overline{\G}^{(3)}=\{e_5, e_6,e_7\},\quad \overline{\G}^{(4)}=\{e_3, e_5, e_6,e_7\},\\
    \overline{\G}^{(5)}=\{e_2, e_3, e_5, e_6,e_7\},\quad \overline{\G}^{(6)}=\{e_1, e_2, e_3, e_5, e_6,e_7\},\\
    \overline{\G}^{(7)}=\{e_4, e_1, e_2, e_3, e_5, e_6,e_7\},\quad \overline{\G}^{(8)}=\{e_8, e_4, e_1, e_2, e_3, e_5, e_6,e_7\}.
\end{gather*}
Since the subalgebras are commutative $\overline{\G}^{(1)},\ \overline{\G}^{(2)},\ \overline{\G}^{(3)},\ \overline{\G}^{(4)},\ \overline{\G}^{(5)},\ \overline{\G}^{(6)}$ , the corresponding covector annihilators $\lambda\big|_{\overline{\G}^{(i)}}$ are coincide with the very same subalgebras. For other elements of the series we have:
\begin{gather*}
    (\overline{\G}^{(7)})^\lambda=
    \left\{
    e_1-e_2,
    e_3-\frac{e_2 j_2}{j_1},
    e_6-\frac{e_2 j_4}{j_1},
    e_5-\frac{e_2 \left(j_3+j_4\right)}{j_1},
    e_7-\frac{e_2}{j_1}
    \right\},\\
    (\overline{\G}^{(8)})^\lambda=
    \left\{
    e_5-\frac{e_6 \left(j_3+j_4\right)}{j_4},\
    -\frac{e_2 \left(j_3+j_4\right)}{j_4}+\frac{e_6 j_1 j_3}{j_4^2}+e_1,\
    e_7-\frac{e_6}{j_4},\
    -\frac{e_2}{j_4}-\frac{e_6 \left(j_2 j_4-j_1\right)}{j_4^2}+e_3
     \right\}
\end{gather*}
Finally, for polarization of the algebra $\overline{\G} $, we obtain:
\begin{gather*}
    \mathcal{P}(\overline{\G})=(\overline{\G}^{(1)})^\lambda+(\overline{\G}^{(2)})^\lambda+(\overline{\G}^{(3)})^\lambda+(\overline{\G}^{(4)})^\lambda+
    (\overline{\G}^{(5)})^\lambda+(\overline{\G}^{(6)})^\lambda+(\overline{\G}^{(7)})^\lambda+(\overline{\G}^{(8)})^\lambda.\\
     \mathcal{P}(\overline{\G})=
     \left\{
     e_1-e_2,
     e_7,
     e_7-\frac{e_2}{j_1},
     e_3-\frac{e_2 j_2}{j_1},
     e_6-\frac{e_2 j_4}{j_1},
     e_5-\frac{e_2 \left(j_3+j_4\right)}{j_1}
     \right\}.
\end{gather*}

We choose a special basis in the algebra $\overline{\G}$ by adding the polarization $\mathcal{P}(\overline{\G})$ to the full space $ \overline{\G}$, i.e. the required basis must have the form $\{l_A, l_a\} $, where $\{l_A\} $ is the polarization basis of $\mathcal{P}(\overline{\G})$, and $\{l_a\}$ are additional basis vectors:
\begin{gather*}
    l_1=e_1-e_2,\
    l_2=e_7,\
    l_3=e_7-\frac{e_2}{j_1},\
    l_4=e_3-\frac{e_2 j_2}{j_1},\
    l_5=e_6-\frac{e_2 j_4}{j_1},\\
    l_6=e_5-\frac{e_2 \left(j_3+j_4\right)}{j_1},\
    l_7=e_4,\
    l_8=e_8.
\end{gather*}
with commutation relations:
\begin{gather*}
    \left[l_1,l_7\right]=j_1 l_2-j_1 l_3+l_1,\
    \left[l_1,l_8\right]=\left(j_3+j_4\right) l_2+\left(-j_3-j_4\right) l_3+l_6,\
    \left[l_2,l_7\right]=l_2,\
    \left[l_3,l_7\right]=l_3,\\
    \left[l_3,l_8\right]=-\frac{j_4 l_2}{j_1}+\frac{j_4 l_3}{j_1}-\frac{l_5}{j_1},\
    \left[l_4,l_7\right]=l_4,\
    \left[l_4,l_8\right]=\left(1-\frac{j_2 j_4}{j_1}\right) l_2+\frac{j_2 j_4 l_3}{j_1}-\frac{j_2 l_5}{j_1},\\
    \left[l_5,l_7\right]=l_5,\
    \left[l_5,l_8\right]=\frac{j_4^2 l_3}{j_1}-\frac{j_4^2 l_2}{j_1}-\frac{j_4 l_5}{j_1},\
    \left[l_6,l_7\right]=j_4 l_2-j_4 l_3+l_5+l_6,\\
    \left[l_6,l_8\right]=\left(-\frac{j_4^2}{j_1}-\frac{j_3 j_4}{j_1}\right) l_2+\left(\frac{j_4^2}{j_1}+\frac{j_3 j_4}{j_1}\right) l_3-\frac{j_3 l_5}{j_1}-\frac{j_4 l_5}{j_1}
\end{gather*}
The coordinates of the previously selected covector in this basis have the form:
\begin{gather}\label{eq:087}
    \Lambda _1= -j_1,\
    \Lambda _2= 1,\
    \Lambda _3= 0,\
    \Lambda _4= 0,\
    \Lambda _5= 0,\
    \Lambda _6= -j_4,\
    \Lambda _7= 0,\
    \Lambda _8= 0.
\end{gather}

\noindent\textbf{Step 3: constructing of the Darboux coordinates}. The transition to the Darboux coordinates is determined by:
\begin{gather*}
    F_1= e^{-q_7} \left(q_8 \left(j_3 \left(\Lambda _3-\Lambda _2\right)+j_4 \left(\Lambda _2-\Lambda _3\right) \left(q_7-1\right)-\Lambda _6+\Lambda _5 q_7\right)+j_1 \left(\Lambda _3-\Lambda _2\right) q_7+\Lambda _1\right),\\
    F_2= \Lambda _2 e^{-q_7},\\
    F_3= \frac{e^{-q_7} \left(j_1 \Lambda _3+j_4 \left(\Lambda _2-\Lambda _3\right) q_8+\Lambda _5 q_8\right)}{j_1},\\
    F_4= \frac{e^{-q_7} \left(j_1 \left(\Lambda _4-\Lambda _2 q_8\right)+j_2 q_8 \left(j_4 \left(\Lambda _2-\Lambda _3\right)+\Lambda _5\right)\right)}{j_1},\\
    F_5= \frac{e^{-q_7} \left(j_1 \Lambda _5+j_4^2 \left(\Lambda _2-\Lambda _3\right) q_8+j_4 \Lambda _5 q_8\right)}{j_1},\\
    F_6= \frac{e^{-q_7} \left(j_1 \left(j_4 \left(\Lambda _3-\Lambda _2\right) q_7+\Lambda _6-\Lambda _5 q_7\right)+\left(j_3+j_4\right) q_8 \left(j_4 \left(\Lambda _2-\Lambda _3\right)+\Lambda _5\right)\right)}{j_1},\\
    F_7= p_7,\\
    F_8= p_8
\end{gather*}
Let us return to the original basis in $\overline{\G}$ taking:
\begin{gather*}
    F_1= f_1-f_2,\
    F_2= f_7,\
    F_3= f_7-\frac{f_2}{j_1},\
    F_4= f_3-\frac{f_2 j_2}{j_1},\
    F_5= f_6-\frac{f_2 j_4}{j_1},\\
    F_6= f_5-\frac{f_2 \left(j_3+j_4\right)}{j_1},\
    F_7= f_4,\
    F_8= f_8
\end{gather*}
As a result, we get:
\begin{gather*}
    f_1= -e^{-q^7} \left(j_1 q^7-j_4 q^8 q^7+j_3 q^8+j_4 q^8\right),\
    f_2= e^{-q^7} \left(j_1-j_4 q^8\right),\\
    f_3= e^{-q^7} \left(j_2-q^8\right),\
    f_4= p_7,\\
    f_5= e^{-q^7} \left(j_3-j_4 q^7\right),\
    f_6= j_4 e^{-q^7},\
    f_7= e^{-q^7},\
    f_8= p_8.
\end{gather*}

\noindent\textbf{Step 4: eliminating the variables $q^a$ and $p_a$}. Eliminating the variables $q^a$ and $p_a$ in the obtained transition functions, we find the invariants of the coadjoint representation of the extended algebra $\overline{\G}$ have the form:
\begin{gather*}
    j_1= \frac{f_2 f_5-f_1 f_6+f_2 f_6}{f_5 f_7+f_6 f_7 (1-\ln f_7)},\\
    j_2=\frac{f_3}{f_7}-\frac{f_1- f_2 \ln f_7}{f_5+f_6 \left(1-\ln f_7\right)},\\
    j_3= \frac{f_5}{f_7}-\frac{f_6}{f_7}\,\ln f_7,\\
    j_4= \frac{f_6}{f_7}.
\end{gather*}

\noindent\textbf{Step 5: selection of the initial group representation invariants}. Invariants of the coadjoint representation of the extended Lie group $\overline{G} $ are arbitrary functions $K(j_1, j_2, j_3, j_4)$. The invariants of the representation of the original Lie group $G$ in the space $V$ are invariants that depend only on the coordinates $f_5,\ f_6,\ f_7,\ f_8$. In this case, such are $ K_1(j_1, j_2, j_3, j_4)=j_3$ and $K_2(j_1, j_2, j_3, j_4) = j_4$.

In the original notation (i.e., renaming the variables $f_5\to x^1$, $f_6\to x^2$, $f_7\to x^3$, $f_8\to x^4$) we finally get:
\begin{gather*}
    J_1(x)=\frac{x^2}{x^3},\quad
    J_2(x)=\frac{x^1}{x^3}-\frac{x^2}{x^3}\, \ln \left(x^3\right).
\end{gather*}


\bibliographystyle{unsrt}

\end{document}